\newcommand{\eps}{\varepsilon}
\renewcommand{\epsilon}{\eps}
\newcommand{\etal}{\emph{et al.}\xspace}
\theoremstyle{plain}
\newenvironment{myquote}%
  {\list{}{\leftmargin=4mm\rightmargin=4mm}\item[]}%
  {\endlist}
\renewcommand{\leq}{\leqslant}
\renewcommand{\geq}{\geqslant}
\title{A Simple Construction of Tournaments with Finite and Uncountable Dichromatic Number}
\author{Arpan Sadhukhan}{Department of Mathematics and Computer Science, TU Eindhoven, the Netherlands}{A.Sadhukhan@tue.nl}{}{}
\authorrunning{A.~Sadhukhan} 
\keywords{dichromatic number, tournaments, sparse partition, grid coloring, uncountable, infinite graphs}
\begin{document}
\setcounter{page}{0}
\maketitle

\begin{abstract}
The dichromatic number $\chi(\vec{G})$ of a digraph $\vec{G}$ is the minimum number of colors needed to color the vertices $V(\vec{G})$ in such a way that no monochromatic directed cycle is obtained. In this note, for any $k\in \mathbb{N}$, we give a simple construction of tournaments with dichromatic number exactly equal to $k$. The proofs are based on a combinatorial lemma on partitioning a checkerboard which may be of independent interest. We also generalize our finite construction to give an elementary construction of a complete digraph of cardinality equal to the cardinality of $\mathbb{R}$ and having an uncountable dichromatic~number. Furthermore, we also construct an oriented balanced complete $n$-partite graph $\vec{K}^{(m)}_n$, such that the minimum number of colors needed to color its vertices such that there is no monochromatic directed triangle is greater than or equal to $nm/(n+2m-2)$.

\end{abstract}

\section{Introduction}
The dichromatic number $\chi(\vec{G})$ of a digraph $\vec{G}$ is the minimum number of colors needed to color the vertices $V(\vec{G})$ in such a way that no monochromatic directed cycle is obtained. This concept extends the notion of the chromatic number of a graph and was introduced by Neumann-Lara~\cite{NEUMANNLARA1982265}. Erd\H{o}s and Neumann-Lara~\cite{erdos1979problems} proved that there exist tournaments with dichromatic number $O(n/\log n)$, where $n$ is the number of vertices of the tournament. Unlike the chromatic number, determining a graph with a specific dichromatic number, even for tournaments, poses challenges in construction. In this paper, we provide a straightforward construction method for tournaments with a dichromatic number precisely equal to any given natural number $k$.~Although such construction is known in literature~\cite{NEUMANNLARA198483}, our construction is simpler and very different. The dichromatic number of digraphs, particularly tournaments, has been extensively studied, a few of those can be found in~\cite{HARUTYUNYAN20121823,NeumannLara1994The3A,NeumannLara2000DichromaticNC}. It is to be noted that the dichromatic number of tournaments is also related to the Erd\H{o}s-Hajnal conjecture~\cite{chudnovsky2016erd}. Let $K^{(m)}_n$ denote a complete balanced $n$-partite graph where each partition (or maximum independent set) has exactly $m$ vertices. Mohar~\etal~\cite{mohar_wu_2016} proved the existence of an orientation of $K^{(m)}_n$, whose dichromatic number is strictly greater than $\min\{nm/4\log(nm), n/2\}$, using a probabilistic approach. In this paper, we explicitly construct an oriented balanced complete $n$-partite graph $\vec{K}^{(m)}_n$, such that the minimum number of colors needed to color its vertices such that there is no monochromatic triangle is greater than or equal to $nm/(n+2m-2)$. Thus the constructed digraph $\vec{K}^{(m)}_n$ has dichromatic number greater than or equal to $nm/(n+2m-2)$. Additional insights and some recent developments on the dichromatic number are explored in~\cite{ellis2019cycle,harutyunyan2023colouring,steiner2023coloring}.

The dichromatic number for an infinite digraph $\vec{G}$ is the smallest cardinal $\kappa$ for which $V(\vec{G})$ can be colored with $\kappa$ many colors avoiding monochromatic
directed cycles. Within the framework of Zermelo-Fraenkel set theory with the Axiom of Choice (\emph{ZFC}), Galvin and Shelah~\cite{GALVIN1973167} constructed a tournament with cardinality $\aleph_1$ with uncountable dichromatic number, where $\aleph_1$ is the first uncountable cardinal. Soukup~\cite{soukup2018orientations} also proved similar and more generalized results in \emph{ZFC}. All these results relied on deep set theoretic theorems and knowledge. In fact Soukup~\cite{soukup2018orientations} highlighted the perceived non-triviality, within \emph{ZFC}, of discovering a single digraph with an uncountable dichromatic number. Even for finding tournaments of cardinality and dichromatic number $\aleph_1$, Soukup used a deep result of J.Moore. He posed the challenge of establishing an elementary proof for the existence of a tournament with cardinality $\aleph_1$ and an uncountable dichromatic number. In this note by assuming the continuum hypothesis along with \emph{ZFC}, we construct a tournament with cardinality $\aleph_1$ and uncountable dichromatic number. Although this does not answer Soukup's question in \emph{ZFC}, our construction distinguishes itself by its elementary nature, offering a clear and easily understandable approach with minimal reliance on set-theoretic prerequisites. Notably, our construction serves as a natural and intuitive extension of the finite tournament discussed in section~\ref{subsec:Finite construction}. Related results on dichromatic number of infinite graphs can be found in~\cite{DBLP:journals/jgt/Joo20,DBLP:journals/jgt/Komjath20}. 

\section{Tournaments and dichromatic number} \label{construction}
We first clarify a few basic notations that is used throughout the paper. All directed graphs in this paper refer to oriented graphs. Thus for a pair of node $u,v$ there can be at most one
directed edge. Let $G$  be a graph or a digraph, then $V(G)$ and $E(G)$ denote the vertex set and edge/arc set of $G$ respectively. $\vec{G}$ denotes an orientation of the undirected simple graph $G$. We say that $\vec{G}$ is acyclic iff it has no directed cycles. For a digraph $\vec{G}$ with $X \subseteq V(\vec{G})$, let $\vec{G}[X]$ denote the sub-digraph induced by $X$.

\subsection{Special sparse partition of a checkerboard} \label{subsec:Combinatorial lemma}

In this section, we prove a few combinatorial results on partitioning of a $n \times m$ checkerboard into some desired subsets. These results will be used to prove bounds on the dichromatic number of the constructed tournaments and balanced complete n-partite graphs in Sections~\ref{subsec:Finite construction} and~\ref{subsec:$n$-partite graphs}. Suppose we have an $n \times m$ checkerboard $G_{n \times m}$ with $n\times m$ cells, indexed by $(i,j)$ with $1\leq i \leq n$ and $j \leq m$. We give a total ordering of the cells of $G_{n \times m}$, we say that $(i_1,j_1)<(i_2,j_2)$ iff either $i_1<i_2$ or $i_1=i_2$ and $j_1<j_2$. A subset $S$ of the cells of $G_{n \times m}$ is said to be \emph{c-sparse} if for any two cells of $S$ with the same $y$-coordinate say $(a_1,b)$ and $(a_2,b)$, then $y\neq b$ and $(a_1,b)<(x,y)<(a_2,b)$ implies $(x,y) \notin S$. A partition $P$ of $G_{n \times m}$ is called \emph{c-sparse} if all the sets of the partition are \emph{c-sparse}. Let $\sigma(G_{n\times m})=\min\{|P|: P \; \text{is a \emph{c-sparse} partition of} \; G_{n \times m}\}$ denote the minimum number of disjoint \emph{c-sparse} sets needed to cover $G_{n \times m}$. Also define a subset $S$ of the cells of $G_{n \times m}$ to be \emph{weak-c-sparse} if for any two cells of $S$ with the same $y$-coordinate say $(a_1,b)$ and $(a_2,b)$, then $y\neq b$, $a_1<x<a_2$ and $(a_1,b)<(x,y)<(a_2,b)$ implies $(x,y) \notin S$. Let $R_i=\{(i,j):1 \leq j \leq m\}, C_j=\{(i,j):1 \leq i \leq n\}$ denote the $i^{th}$ row and the $j^{th}$ column of $G_{n \times m}$ respectively for all $1\leq i \leq n$ and $j \leq m$.

We prove the following theorem:
\begin{theorem} \label{sparse partition}
    $\sigma(G_{n\times n})$ equals the smallest natural number strictly greater than $\frac{n}{2}$.
\end{theorem}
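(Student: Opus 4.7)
The plan is to prove both $\sigma(G_{n\times n}) \leq \lfloor n/2 \rfloor + 1$ and $\sigma(G_{n\times n}) \geq \lfloor n/2 \rfloor + 1$; a short arithmetic check verifies that $\lfloor n/2 \rfloor + 1$ equals both the smallest natural number strictly greater than $n/2$ and the ceiling $\lceil n^2/(2n-1) \rceil$ for every $n \geq 1$.

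For the lower bound I would first prove the structural lemma that any \emph{c-sparse} set $S \subseteq G_{n\times n}$ satisfies $|S| \leq 2n-1$. Listing the distinct columns occupied by $S$ in the order in which their cells first appear in the total ordering yields a sequence $b_1, \ldots, b_K$; \emph{c-sparsity} forces this ``compressed'' column sequence to be repetition-free. A further consequence is that the $S$-cells in column $b_\ell$ span a contiguous row interval $[r_{\ell,1}, r_{\ell,T_\ell}]$, and the first cell of the next column's block satisfies $r_{\ell+1,1} \geq r_{\ell,T_\ell}$, since one extra row contributes $n$ to the total-order position, which dominates any column offset of at most $n-1$. Telescoping over $\ell$ gives $|S| \leq (n-1) + K \leq 2n-1$, and the pigeonhole bound $\sigma(G_{n\times n}) \geq \lceil n^2/(2n-1) \rceil$ closes the lower bound.

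For the matching upper bound I would exhibit an explicit coloring using $k+1 := \lfloor n/2 \rfloor + 1$ colors. Setting $M := 2(k+1)$, I color each cell $(i,j)$ by
\[
  f(i,j) \equiv \lceil (j - i)/2 \rceil \pmod{k+1}.
\]
A single color class then consists precisely of the cells with $j - i \in \{v-1, v\} \pmod M$ for one fixed $v$, which is a union of at most four diagonals of the checkerboard. To check each class is \emph{c-sparse} I would verify: (i) within any fixed column, a color class contains at most two cells, and when it contains two they lie in consecutive rows $(i, b)$ and $(i+1, b)$; and (ii) the cells $(i, b+1), \ldots, (i, n), (i+1, 1), \ldots, (i+1, b-1)$ that fall strictly between those two in the total order all have residues $j - i \pmod M$ outside $\{v-1, v\}$. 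Both claims reduce to the arithmetic observation that advancing the row index by one shifts the residue $j - i \pmod M$ by $-1$, so the target pair $\{v-1,v\}$ is never revisited in the gap as long as $M > n$.

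The main obstacle is the upper bound, specifically a clean uniform verification of \emph{c-sparsity} over all color classes and both parities of $n$, including the classes whose support ``wraps around'' the board and contains both positive and negative diagonals. The strict inequality $M > n$ is the linchpin of that verification: it simultaneously rules out three same-colored cells in one column and guarantees that no forbidden same-colored cell lands in the row-transition gap between the two that are allowed.
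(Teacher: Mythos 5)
Your proposal is correct, and its overall architecture matches the paper's: both halves rest on the bound $|S|\leq 2n-1$ for a \emph{c-sparse} set plus a pigeonhole count for the lower bound, and on a partition into unions of diagonals grouped by consecutive residues for the upper bound (for odd $n$ your classes $j-i\in\{v-1,v\}\pmod{n+1}$ are exactly the paper's sets $S_{2k}=D_{2k}\cup D_{2k+1}\cup D_{2k-n}\cup D_{2k-n-1}$, up to the sign convention on the diagonal index). The differences are in how the two ingredients are proved. For the $2n-1$ bound the paper inducts on $n$, locating the smallest cell of $S$ and deleting a row and a column that together meet $S$ in at most two cells; your argument instead observes that \emph{c-sparsity} forces the cells of each column to form a contiguous block in the total order, so that the row spans of successive blocks telescope to give $|S|\leq (n-1)+K\leq 2n-1$ directly --- this is non-inductive and, I think, cleaner, and it also lets you quote the sharper count $\lceil n^2/(2n-1)\rceil$ (the paper gets away with the cruder $|P|>n^2/2n$, which suffices here). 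For the upper bound the paper constructs the partition only for odd $n$ and handles even $n$ by deleting a row and column from $G_{(n+1)\times(n+1)}$, whereas your single formula $f(i,j)\equiv\lceil (j-i)/2\rceil \pmod{\lfloor n/2\rfloor+1}$ covers both parities at once; the verification you sketch is sound, since the key interval $\{-i,\dots,n-i\}$ of diagonal values between two same-column cells of a class has length $n+1\leq M$, so each residue pair is hit only at the two designated cells. The one place to be careful in writing this up is your claim that the $S$-cells of a column ``span a contiguous row interval'': they need only lie within $[r_{\ell,1},r_{\ell,T_\ell}]$, not fill it, but the inequality $T_\ell\leq r_{\ell,T_\ell}-r_{\ell,1}+1$ is all the telescoping needs.
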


Before we prove the theorem we proceed to prove a few necessary lemmas. Below is a trivial observation.

\begin{observation} \label{deletion}
    Let $S$ is a c-sparse set of $G_{n\times m}$. If we delete some rows and some columns of $G_{n\times m}$ to obtain a $n' \times m'$ checkerboard $G_{n'\times m'}$ with the ordering of the cells induced from $G_{n\times m}$. Then $S \cap G_{n'\times m'}$ is a c-sparse set of $G_{n'\times m'}$. 
\end{observation}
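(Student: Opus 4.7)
The plan is to show that c-sparsity is a hereditary property: the conditions that define c-sparse (same $y$-coordinate, distinct $y$-coordinate of an intermediate cell, and the lexicographic ordering) are all preserved when we pass from $G_{n\times m}$ to the sub-checkerboard $G_{n'\times m'}$ obtained by deleting rows and columns. So the proof is essentially a translation argument between the old and the new coordinate systems.

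More precisely, I would first fix the correspondence between cells of $G_{n'\times m'}$ and the surviving cells of $G_{n\times m}$. If the surviving rows (in the original indexing) are $i_1 < i_2 < \dots < i_{n'}$ and the surviving columns are $j_1 < j_2 < \dots < j_{m'}$, then the cell of $G_{n'\times m'}$ whose new index is $(r,s)$ corresponds to the original cell $(i_r, j_s)$. Two immediate properties follow: (a) two surviving cells share a column (same $y$-coordinate) in the new grid iff they shared a column in the old grid, and (b) the lexicographic order on new indices coincides with the restriction of the lexicographic order on old indices, since the maps $r \mapsto i_r$ and $s \mapsto j_s$ are strictly increasing.

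To verify c-sparsity of $S \cap G_{n'\times m'}$, I would take two cells of $S \cap G_{n'\times m'}$ with the same $y$-coordinate in the new grid, say $(a_1', b')$ and $(a_2', b')$, corresponding to old cells $(a_1, b)$ and $(a_2, b)$ with $b = j_{b'}$. Suppose $(x', y')$ is a cell of $G_{n'\times m'}$ with $y' \neq b'$ and $(a_1', b') < (x', y') < (a_2', b')$ in the new ordering. Let $(x, y)$ be its old counterpart. By (a) above, $y' \neq b'$ forces $y \neq b$; by (b), $(a_1, b) < (x, y) < (a_2, b)$ in the old ordering. Since $S$ is c-sparse in $G_{n\times m}$, the definition applied to $(a_1, b), (a_2, b) \in S$ gives $(x, y) \notin S$, so $(x', y') \notin S \cap G_{n'\times m'}$, as needed.

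There is no real obstacle here; the only thing to be careful about is distinguishing between old and new indices when re-indexing, and confirming that the lexicographic order is compatible with this re-indexing (which is immediate because deleting entries from a totally ordered set and renumbering is order-preserving). The same argument would work with the weaker condition $a_1 < x < a_2$ if one wanted to deduce the analogous observation for weak-c-sparse sets.
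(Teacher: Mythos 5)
Your proof is correct: the paper states this observation without proof (calling it trivial), and your careful re-indexing argument — that the column relation and the induced order are preserved under deletion of rows and columns, so any violation of c-sparsity in $G_{n'\times m'}$ would already be a violation in $G_{n\times m}$ — is exactly the intended verification.
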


Now we prove that the maximum size of a \emph{c-sparse} set of $G_{n\times n}$ is upper bounded by $2n-1$.
\begin{lemma} \label{upper bound}
    Let $S$ be a c-sparse set of $G_{n\times n}$, then $|S|\leq 2n-1$.
\end{lemma}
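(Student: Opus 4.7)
The plan is to order the cells of $S$ by the given total order as $c_1 < c_2 < \cdots < c_k$ with $c_t = (i_t, j_t)$, establish a clean structural fact about this enumeration, and then close via a short counting identity on consecutive transitions.

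First I would prove the key structural claim: for each column $j$, the cells of $S$ lying in column $j$ occupy consecutive positions in the list $c_1, \ldots, c_k$. This is essentially a direct reading of the c-sparse definition. If $c_{t_1}, c_{t_2} \in S$ both lie in column $j$ with $t_1 < t_2$, then any $c_u$ with $t_1 < u < t_2$ satisfies $c_{t_1} < c_u < c_{t_2}$ in the total order, so the c-sparse condition forces either $c_u \notin S$ or $c_u$'s column to equal $j$; since $c_u \in S$, its column must be $j$.

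Next I would observe that the row coordinates $i_1 \leq i_2 \leq \cdots \leq i_k$ are weakly increasing (because the total order is lexicographic, row first), while by the structural claim the column coordinates $j_1, \ldots, j_k$ decompose into maximal constant blocks, one per column used by $S$. Let $N_r$ be the number of distinct rows occupied by $S$ and $R$ the number of distinct columns. Then exactly $N_r - 1$ of the $k-1$ consecutive transitions $c_t \to c_{t+1}$ change the row, and exactly $R - 1$ change the column.

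Finally I would classify each transition by which coordinates change. A transition cannot leave both coordinates unchanged (since $c_t \neq c_{t+1}$), so there are exactly three types: (A) same row, new column; (B) new row, new column; (C) new row, same column, with respective counts $a, b, c$. These satisfy
\[
a + b + c = k - 1,\qquad a + b = R - 1,\qquad b + c = N_r - 1,
\]
and eliminating $a$ and $c$ yields $b = N_r + R - 1 - k \geq 0$, hence $k \leq N_r + R - 1 \leq n + n - 1 = 2n - 1$.

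The main obstacle I expect is the first step: one must parse the c-sparse definition (phrased using the total order on cells together with the side condition $y \neq b$) carefully and confirm that it really does say ``cells of $S$ in a common column are contiguous in the enumeration.'' Once that structural fact is in hand, the remainder is purely mechanical bookkeeping on the three transition types, and indeed the argument gives the sharper general bound $|S| \leq N_r + R - 1$ in $G_{n \times m}$ for free.
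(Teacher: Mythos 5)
Your proof is correct, but it takes a genuinely different route from the paper's. The paper proves Lemma~\ref{upper bound} by induction on $n$: it takes the smallest cell $(i^*,j^*)$ of a hypothetical $2k$-element c-sparse set, shows that one of $R_{i^*}$, $C_{j^*}$ meets the set in at most one cell, finds (via pigeonhole) a row--column pair containing at most two cells of the set, deletes them, and contradicts the induction hypothesis on the resulting $(k-1)\times(k-1)$ board. You instead sort $S$ in the lexicographic order as $c_1<\cdots<c_k$, observe that the c-sparse condition says exactly that the cells of $S$ in a fixed column occupy consecutive positions in this enumeration (while the row coordinates are automatically weakly increasing), and then count the three types of consecutive transitions to get $b=N_r+R-1-k\geq 0$, i.e.\ $|S|\leq N_r+R-1$ where $N_r$ and $R$ are the numbers of occupied rows and columns. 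Your reading of the definition is the right one (the ``$y$-coordinate'' is the second index, i.e.\ the column index, matching $C_j$), and the bookkeeping checks out. What your approach buys: it avoids induction entirely, and the bound $|S|\leq N_r+R-1$ immediately yields not only Lemma~\ref{upper bound} but also the general rectangular bound $|S|\leq n+m-1$ of Theorem~\ref{general upper bound}, which the paper has to derive separately by a sorting-and-deletion reduction to the square case. The paper's inductive argument, on the other hand, is the template it reuses almost verbatim elsewhere, so it fits the paper's overall structure even though it is longer.
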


\begin{proof}
  We proceed by induction on $n$. Clearly, for $n=1$ the theorem holds trivially. Now say the theorem holds for $n=k-1$. Let $S$ be a \emph{c-sparse} subset of $G_{k\times k}$ with cardinality greater than or equal to $2k$. Let $S'$ be a subset of $S$ with exactly $2k$ cells. Observe that $S'$ is trivially \emph{c-sparse}. Let $(i^*,j^*)$ be the smallest cell that belongs to $S'$. Then either $|S' \cap R_{i^*}|\leq 1$ or $|S' \cap C_{j^*}|\leq 1$, otherwise let $(i^*,j')$ and $(i',j^*)$ be two cells of $S'$ with $i' > i^*$ and $j' > j^*$. Then $(i^*,j^*)< (i^*, j') < (i', j^*)$ and $j' \neq j^*$ which is a contradiction to the fact that $S'$ is \emph{c-sparse}. Now we divide the proof into two cases.

\emph{Case 1}: $R_{i^*} \cap S'$ only consists of the element $(i^*,j^*)$.

Then $C_{j^*} \cap S'$ must have at least $2$ cells other than the cell $(i^*,j^*)$, otherwise $|(R_{i^*} \cup C_{j^*})\cap S'| \leq 2$. Now $G_{k\times k}\setminus (R_{i^*} \cup C_{j^*})$ is a $k-1 \times k-1$ checkerboard and let the ordering of the cells be induced from $G_{k\times k}$. So by Observation~\ref{deletion} we know that $S' \cap G_{k\times k}\setminus (R_{i^*} \cup C_{j^*})$ is a \emph{c-sparse} set of $G_{k\times k}\setminus (R_{i^*} \cup C_{j^*})$. Also $|S' \cap G_{k\times k}\setminus (R_{i^*} \cup C_{j^*}) |=2k-2$ which is a contradiction to the induction hypothesis. So we have $|C_{j^*} \cap S'| \geq 3$, hence by pigeon hole principle there exists  $j'' \leq k$ such that $|C_{j''} \cap S'| \leq 1$. So we have $|(R_{i^*} \cup C_{j''})\cap S'| \leq 2$, hence $G_{k\times k}\setminus (R_{i^*} \cup C_{j''})$ is a $k-1 \times k-1$ checkerboard and let the ordering of the cells be induced from $G_{k\times k}$. We have $|S' \cap G_{k\times k}\setminus (R_{i^*} \cup C_{j''}) | \geq 2k-2$, and since by Observation~\ref{deletion} we know that $S'$ is \emph{c-sparse} in $G_{k\times k}\setminus (R_{i^*} \cup C_{j''})$, we again have a contradiction to the induction hypothesis.

\emph{Case 2}: $C_{j^*} \cap S'$ only consists of the element $(i^*,j^*)$.

Then $R_{i^*} \cap S'$ must have at least $2$ cells other than the cell $(i^*,j^*)$, otherwise $|(R_{i^*} \cup C_{j^*})\cap S'| \leq 2$. Now $G_{k\times k}\setminus (R_{i^*} \cup C_{j^*})$ is a $k-1 \times k-1$ checkerboard and let the ordering of the cells be induced from $G_{k\times k}$. So by Observation~\ref{deletion} we know that $S' \cap G_{k\times k}\setminus (R_{i^*} \cup C_{j^*})$ is a \emph{c-sparse} set of $G_{k\times k}\setminus (R_{i^*} \cup C_{j^*})$. Also $|S' \cap G_{k\times k}\setminus (R_{i^*} \cup C_{j^*}) |=2k-2$ which is a contradiction to the induction hypothesis. So we have $|R_{i^*} \cap S'| \geq 3$, hence by pigeon hole principle there exists  $i'' \leq k$ such that $|R_{i''} \cap S'| \leq 1$. So we have $|(R_{i''} \cup C_{j^*})\cap S'| \leq 2$, hence $G_{k\times k}\setminus (R_{i''} \cup C_{j^*})$ is a $k-1 \times k-1$ checkerboard and let the ordering of the cells be induced from $G_{k\times k}$. We have $|S' \cap G_{k\times k}\setminus (R_{i''} \cup C_{j^*}) | \geq 2k-2$, and since by Observation~\ref{deletion} we know that $S'$ is \emph{c-sparse} in $G_{k\times k}\setminus (R_{i''} \cup C_{j^*})$, we again have a contradiction to the induction hypothesis.

Hence both the cases above cannot happen, thus finishing the proof.

  \end{proof}

Now we prove an upper bound on the size of \emph{c-sparse} set for the general $n \times m$ checkerboard.
\begin{theorem}\label{general upper bound}
    Let $S$ be a c-sparse set of $G_{n\times m}$, then $|S|\leq n+m-1$.
\end{theorem}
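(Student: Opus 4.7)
My plan is to prove Theorem~\ref{general upper bound} by strong induction on $n+m$, closely following the proof of Lemma~\ref{upper bound} but exploiting the fact that the target bound $n+m-1$ is additive in the two dimensions: it suffices to shrink the checkerboard by a single row \emph{or} a single column at each inductive step, rather than one of each as was done in the square case. For the base cases $n=1$ or $m=1$, any two cells of $G_{1\times m}$ (resp.\ $G_{n\times 1}$) either lie in distinct columns or share the single column, so the $c$-sparse condition holds vacuously; hence every subset is $c$-sparse and $|S|\le \max(n,m)=n+m-1$.

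For the inductive step, assume the bound holds for all dimensions with strictly smaller sum, take $n,m\ge 2$, and suppose toward a contradiction that $S$ is $c$-sparse in $G_{n\times m}$ with $|S|\ge n+m$. Let $(i^*,j^*)$ be the smallest cell of $S$ under the lex order. Exactly as in the proof of Lemma~\ref{upper bound}, I claim that either $R_{i^*}\cap S=\{(i^*,j^*)\}$ or $C_{j^*}\cap S=\{(i^*,j^*)\}$: otherwise one can pick $(i^*,j')\in S$ with $j'>j^*$ and $(i',j^*)\in S$ with $i'>i^*$, yielding $(i^*,j^*)<(i^*,j')<(i',j^*)$ with $j'\neq j^*$, a violation of the $c$-sparse property for the pair $(i^*,j^*),(i',j^*)\in C_{j^*}\cap S$. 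In the first subcase, deleting row $R_{i^*}$ produces a $G_{(n-1)\times m}$ whose induced ordering agrees with the original; by Observation~\ref{deletion} the set $S\setminus\{(i^*,j^*)\}$ remains $c$-sparse in it and has size at least $n+m-1=(n-1)+m$, contradicting the inductive hypothesis applied to $(n-1,m)$. The second subcase is symmetric, deleting column $C_{j^*}$ and invoking the hypothesis for $(n,m-1)$.

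I do not expect any substantial obstacle: the argument is a direct and in fact slightly shorter adaptation of the square case, since the additive form of the bound saves us from having to locate an extra row or column of low intersection before we can apply induction, which was the technical inconvenience in the proof of Lemma~\ref{upper bound}. The only care needed is in the base case, where the rectangle degenerates to a single row or column and one must verify that the $c$-sparse condition becomes vacuous there.
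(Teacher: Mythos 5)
Your proof is correct, but it takes a genuinely different route from the paper. The paper proves Theorem~\ref{general upper bound} by reducing the rectangular case to the square case: assuming $n>m$, it orders the rows by the size of their intersection with a set $S'$ of exactly $n+m$ cells, shows by a counting argument that the $n-m$ rows of smallest intersection contain at most $n-m$ cells of $S'$ in total, deletes them to obtain an $m\times m$ board still containing at least $2m$ cells of $S'$, and invokes Lemma~\ref{upper bound} (the case $m>n$ is symmetric in columns, and $n=m$ is immediate). You instead run a single induction on $n+m$: the minimal cell $(i^*,j^*)$ of $S$ meets $S$ in only one of $R_{i^*}$ or $C_{j^*}$ (your verification of this dichotomy is correct and matches the one inside the paper's proof of Lemma~\ref{upper bound}), so deleting that one row or that one column removes exactly one cell of $S$ and drops the target bound by exactly one, and Observation~\ref{deletion} carries the $c$-sparseness down. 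Your base cases $n=1$ and $m=1$ are handled correctly (the condition is vacuous there, consistent with the tightness remark after Lemma~\ref{upper bound weak-c-sparse}). The additive form of the bound is precisely what lets you shrink by a single row \emph{or} column, which is the step that is unavailable in the square setting and forces the paper's Lemma~\ref{upper bound} to hunt for an additional low-intersection row or column before recursing. A pleasant byproduct of your argument is that it is self-contained: it does not use Lemma~\ref{upper bound} at all, and in fact reproves it as the special case $n=m$, whereas the paper's proof of the theorem depends on that lemma.
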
 

\begin{proof}
Let $S$ be a \emph{c-sparse} subset of $G_{n\times m}$ with cardinality greater than or equal to $n+m$. Let $S'$ be a subset of $S$ with exactly $n+m$ cells. Observe that $S'$ is trivially \emph{c-sparse}. We divide the proof into three cases. 
\begin{itemize}
    \item \emph{Case 1: $n>m$}
    
Let $R'_j=R_j \cap S'$ for all $1\leq j \leq n$. Assume $|R'_{j_i}|\leq |R'_{j_{i+1}}|$ for all $1\leq i \leq n-1$. Now observe that if \[ \sum_{i=1}^{n-m} |R'_{j_i}| >  n-m. \] then clearly $|R'_{j_{n-m+1}}| \geq 2$ and hence \[ \sum_{i=1}^{n} |R'_{j_i}|= \sum_{i=1}^{n-m} |R'_{j_i}|+ \sum_{i=n-m+1}^{n} |R'_{j_i}| \geq  (n-m+1)+2m>  n+m. \] This is a contradiction as $|S'|=n+m$.
So we have~\[ \sum_{i=1}^{n-m} |R'_{j_i}| \leq  n-m. \]
Hence deleting the rows $R_{j_i}$ for all $1\leq i\leq n-m$ we obtain a $m \times m$ checkerboard $G_{m\times m}$ such that $S' \cap G_{m\times m}\geq n+m-(n-m)=2m$ which contradicts Lemma~\ref{upper bound}. Thus finishing the proof for \emph{Case 1}.

\item \emph{Case 2: $m>n$}

The proof of this case is exactly similar to the proof for \emph{Case 1}, the only difference is instead of rows $R_j$, we do the same argument with the columns $C_j$.

\item \emph{Case 3: $m=n$}

This case directly follows from Lemma~\ref{upper bound}.
\end{itemize}
Thus finishing the proof.
\end{proof}

Now consider the checkerboard $G_{n \times n}$, where $n$ is an odd natural number. Now let $D_i \subseteq G_{n \times n}=\{(x,y): (x-y)=i\}$ be the diagonals of $G_{n \times n}$. So clearly for $i>n-1$ and $i<-n+1$, $D_i$ is an empty set. So clearly $\bigcup\limits_{i=-n+1}^{n-1}D_i=G_{n \times n}$. Let $S_{2k}= D_{2k} \cup D_{2k+1} \cup D_{2k-n} \cup D_{2k-n-1}$ for $0 \leq k \leq (n-1)/2$. In the following lemma, we show that the subset $S_{2k}$ of $G_{n \times n}$, where $n$ is an odd natural number, is \emph{c-sparse}. For an illustration, see figure~\ref{fig:sparsepartition}.

\begin{lemma} \label{lower bound}
   For $0 \leq k \leq (n-1)/2$, the subset $S_{2k}$ of $G_{n \times n}$, where $n$ is an odd natural number, is c-sparse.
\end{lemma}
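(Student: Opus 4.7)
The plan is to verify the definition of c-sparse directly by a case analysis on how cells of $S_{2k}$ can share a column. First, I would translate the diagonal condition into a column description: a cell $(x,b)$ lies on $D_i$ iff $x = b+i$, so the cells of $S_{2k}$ in column $b$ occur at rows
\[
b+2k,\quad b+2k+1,\quad b+2k-n,\quad b+2k-n-1,
\]
whenever these fall in $[1,n]$. In particular, column $b$ meets $S_{2k}$ in at most four cells.

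The key observation is that, for any fixed column $b$, the cells of $S_{2k}$ in that column come from at most one of the two pairs $\{D_{2k},D_{2k+1}\}$ and $\{D_{2k-n},D_{2k-n-1}\}$. Indeed, a row of the form $b+2k$ or $b+2k+1$ is in $[1,n]$ only if $b \leq n-2k$, while a row of the form $b+2k-n$ or $b+2k-n-1$ is in $[1,n]$ only if $b \geq n+1-2k$. Since $n-2k < n+1-2k$, these two conditions are incompatible. Consequently, any two cells of $S_{2k}$ sharing a column $b$ sit in two consecutive rows, either $\{b+2k,\,b+2k+1\}$ or $\{b+2k-n-1,\,b+2k-n\}$.

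To finish, I would fix a pair $(a_1,b),(a_2,b) \in S_{2k}$ with $a_2 = a_1+1$ and verify that no cell of $S_{2k}$ with $y \neq b$ lies strictly between them in the lex order. By the definition of $<$, such a cell must have the form $(a_1,y)$ with $y>b$ or $(a_2,y)$ with $y<b$. Now for any row $x$, the columns $y$ with $(x,y) \in S_{2k}$ are exactly $y \in \{x-2k,\,x-2k-1,\,x-2k+n,\,x-2k+n+1\}$ intersected with $[1,n]$. Plugging in $x=a_1$ and $x=a_2$ for each of the two possible pairs, a direct check shows that, after imposing $1 \le y \le n$, none of these $y$-values satisfies the required inequality with $b$ (roughly: for the high pair, the candidates $y>b$ would force $y \geq b+n$, which exceeds $n$; for the low pair, the candidates $y<b$ would force $y \leq b-n$, which falls below $1$).

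The only real work is the bookkeeping in the last step, which is the main obstacle: one must be careful about the boundary cases where one or more of the four diagonals is empty (for $k=0$ or $k=(n-1)/2$) and about the range restriction $1 \leq y \leq n$. No further ideas beyond this case analysis are needed.
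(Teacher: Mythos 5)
Your proposal is correct and follows essentially the same route as the paper's proof: both arguments first rule out two same-column cells coming from the pair $\{D_{2k},D_{2k+1}\}$ and the pair $\{D_{2k-n},D_{2k-n-1}\}$ simultaneously (your incompatible conditions on $b$ are a rearrangement of the paper's $x_1-x_2\geq n$ contradiction), and then reduce to a consecutive pair $a_2=a_1+1$ and verify by direct arithmetic that no cell of $S_{2k}$ lies strictly between them in the lex order. The only difference is presentational — you enumerate admissible column values per row, while the paper bounds the diagonal index $x'-y'$ — and your flagged "direct check" does go through, including the boundary cases $k=0$ and $k=(n-1)/2$.
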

\begin{proof}
    Suppose $S_{2k}$ is not \emph{c-sparse}. Then there exists $(x_1,y), (x_2,y), (x', y') \in S_{2k}$ such that $(x_1,y)<(x',y')<(x_2,y)$ with $y' \neq y$. Observe that no two points in $D_i$ have the same y-coordinate. Now we divide the proof into two exhaustive sets of cases.

\emph{Case 1:}  One of the points between $(x_1,y)$ and $(x_2,y)$ lies in $D_{2k} \cup D_{2k+1}$ and the other lies in $D_{2k-n} \cup D_{2k-n-1}$.

Without loss of generality, assume $(x_1,y) \in D_{2k} \cup D_{2k+1}$ and $(x_2,y) \in D_{2k-n} \cup D_{2k-n-1}$. So $(x_1-y) \geq 2k$ and $(x_2-y) \leq 2k-n$, hence $x_1-x_2 \geq n$, which is a contradiction as they are points of $G_{n \times n}$.

\emph{Case 2:} Both the points $(x_1,y)$ and $(x_2,y)$ lies either in $D_{2k} \cup D_{2k+1}$ or in $D_{2k-n} \cup D_{2k-n-1}$.

Assume $(x_1,y) \in D_{2k}$ and $(x_2,y) \in D_{2k+1}$. So, $x_2=x_1+1$.  Now since $(x_1,y)<(x',y')<(x_2,y)$, either $(x',y')=(x_1, y')$ with $y'>y$ or $(x',y')=(x_1+1, y')$ with $y'<y$. In the former case $(x'-y')<(x_1-y)=2k$, also since $y'-y \leq n-1$,  $(x'-y')=(x_1-y')=(x_1-y-(y'-y)) \geq 2k-n+1$. So in this case $(x',y')$ cannot lie in $S_2k$. Now in the latter case when $(x',y')=(x_1+1, y')$ with $y'<y$, we have $(x'-y')=(x_1+1-y')>(x_1+1-y) \geq 2k+1$, so again in this case $(x',y')$ cannot belong to $S_{2k}$. Now the only other possibility that remains is that $(x_1,y) \in D_{2k-n-1}$ and $(x_2,y) \in D_{2k-n}$, then again using similar arguments it is easy to show that $(x',y')$ cannot belong to $S_{2k}$.

Thus, we have proved that the set $S_{2k}$ is \emph{c-sparse}.
\end{proof}

Now we proceed to prove Theorem~\ref{sparse partition}.

\begin{proof}[Proof of Theorem~\ref{sparse partition}]
    Let $n^*$ be the smallest integer strictly greater than $n/2$.
    From Lemma~\ref{upper bound} we know that the cardinality of any \emph{c-sparse} set is strictly less than $2n$. Hence if $P$ is a partition $G_{n \times n}$ into \emph{c-sparse} sets, then clearly $|P|> n^2/2n=n/2\geq n^*-1$.

On the other hand if $n$ is odd, we know from Lemma~\ref{lower bound} that for $0 \leq k \leq (n-1)/2$, the sets $S_{2k}$ are \emph{c-sparse}. Also trivially $\bigcup\limits_{i=0}^{(n-1)/2}S_{2k}=\bigcup\limits_{i=-n+1}^{n-1}D_i=G_{n \times n}$. So $P'= \{S_{2k}: 0 \leq k \leq (n-1)/2\}$, form a \emph{c-sparse} partition of $G_{n \times n}$. Also $|P'|=(n+1)/2=n^*$. Now it is easy to see that if there is a \emph{c-sparse} partition of $G_{n \times n}$ into $k$ sets, then trivially there exists a \emph{c-sparse} partition of $G_{n-1 \times n-1}$ into $k$ sets. So if $n$ is even, we already know that there is a \emph{c-sparse} partition of $G_{n+1 \times n+1}$ into $(n+2)/2$ sets, so there is a \emph{c-sparse} partition of $G_{n \times n}$ into $(n+2)/2=n^*$ sets. Thus finishing the proof.
\end{proof}

\begin{figure}
\begin{center}
\includegraphics[width=0.4\columnwidth]{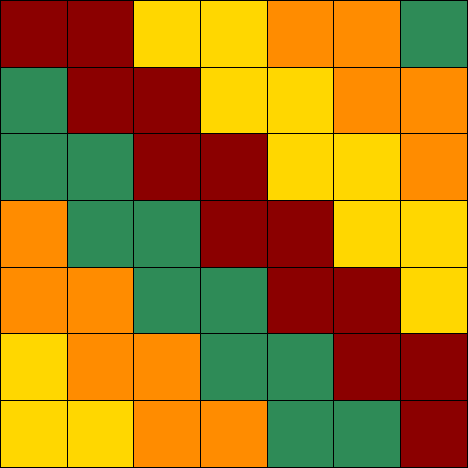}
\end{center}
\caption{Optimal \emph{c-sparse} partition of a $7 \times 7$ checkerboard, where each of the four color classes represents a \emph{c-sparse} set.}
\label{fig:sparsepartition}
\end{figure}

Now we prove an upper bound on the size of a \emph{weak-c-sparse} subset of $G_{n \times n}$.

\begin{lemma} \label{upper bound weak-c-sparse}
    Let $S$ be a weak-c-sparse set of $G_{n\times m}$, then $|S|\leq n+2m-2$.
\end{lemma}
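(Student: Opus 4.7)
The plan is to partition $S$ into three types based on each cell's position within its own column, and then bound each type by a simple count.

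For each non-empty column $C_j$, declare the cell of smallest row index in $S \cap C_j$ to be \emph{primary}; if $|S \cap C_j| \geq 2$, declare the cell of largest row index to be \emph{secondary}; any cell of $S$ in $C_j$ that is neither (equivalently, a cell that has another cell of $S$ strictly above and strictly below it in the same column) is \emph{intermediate}. This gives a partition of $S$. Since each of the $m$ columns contributes at most one primary and at most one secondary cell, primaries and secondaries together number at most $2m$. It thus suffices to show that there are at most $n-2$ intermediate cells.

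I will prove two facts about intermediate cells: (i) no cell of $R_1$ or $R_n$ can be intermediate, since being intermediate requires the same column to contain a cell of $S$ with strictly smaller row index and another with strictly larger row index; and (ii) each row contains at most one intermediate cell. For (ii), suppose $(i,j_1)$ and $(i,j_2)$ are both intermediate and lie in the same row $R_i$ with $j_1 < j_2$. Since $(i,j_1)$ is intermediate, pick $a < i < b$ with $(a,j_1),(b,j_1) \in S$. Apply the \emph{weak-c-sparse} condition to the pair $(a,j_1), (b,j_1)$ (which share the $y$-coordinate $j_1$): the candidate cell $(i,j_2)$ has $y$-coordinate $j_2 \neq j_1$, satisfies $a < i < b$, and so automatically fulfils the ordering requirement $(a,j_1) < (i,j_2) < (b,j_1)$. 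The definition therefore forces $(i,j_2) \notin S$, a contradiction. Combining (i) and (ii) bounds the intermediate cells by $n-2$, yielding $|S| \leq 2m + (n-2) = n+2m-2$.

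The main obstacle I expect is identifying this three-way decomposition; once it is in place the argument reduces to a single clean application of the weak-c-sparse condition to the top and bottom cells of a column. A naive induction on $n$ or $m$ analogous to the proof of Theorem~\ref{general upper bound} is too loose, because splitting off a heavy column leaves two independent subproblems above and below, each of which would be granted its own ``column allowance'' and thereby over-counts. The refined partition above is what makes it possible to count intermediates row-wise (capped by $n-2$) while primaries and secondaries are counted column-wise (capped by $2m$), giving exactly the desired bound $n+2m-2$.
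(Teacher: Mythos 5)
Your proof is correct, and it takes a genuinely different route from the paper's. The paper deletes the topmost cell of $S$ in each column to form $S'$, shows by a two-case analysis that $S'$ is \emph{c-sparse}, observes that $S'$ misses $R_1$ and hence lives in $G_{n-1\times m}$, and then invokes the bound $|S'|\leq n+m-2$ from Theorem~\ref{general upper bound} (which itself rests on the inductive Lemma~\ref{upper bound}); adding back the at most $m$ deleted cells gives $n+2m-2$. Your argument is self-contained: the primary/secondary/intermediate trichotomy counts the column extremes directly (at most $2m$) and reduces the rest to the observation that an intermediate cell $(i,j_1)$, via its witnesses $(a,j_1),(b,j_1)\in S$ with $a<i<b$, excludes every cell of $S$ with a different column index from row $i$ --- a single clean application of the \emph{weak-c-sparse} definition, with no induction and no appeal to the \emph{c-sparse} machinery. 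What the paper's route buys is reuse of results it has already proved; what yours buys is a shorter standalone proof that also exposes the stronger structural fact that a row containing an intermediate cell contains no other cell of $S$ at all. One cosmetic point: the step ``intermediates number at most $n-2$'' tacitly assumes $n\geq 2$; for $n=1$ there are no secondaries or intermediates and $|S|\leq m\leq n+2m-2$ holds trivially, so a one-line disclaimer suffices.
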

\begin{proof}
Let $(i_j,j)$ denote the smallest element of $S \cap C_j$ for all $1\leq j \leq m$. Note that if $S \cap C_j= \emptyset$, then the smallest element does not exist. Let $S'=S \setminus \{(i_j,j): 1\leq j \leq n\}$, denote the subset obtained by deleting the smallest elements of every column that is present in $S$. Observe that $S'$ is trivially a \emph{weak-c-sparse} set of $G_{n\times m}$. Hence $S'$ is not \emph{c-sparse} only if there exist three cells  $(a_1,b), (x,y), (a_2,b)$ such that $y\neq b$, $x=a_1$ or $x=a_2$, $(a_1,b)<(x,y)<(a_2,b)$ and $(x,y) \in S'$. Now we divide the proof into two cases.

\emph{Case 1: $y\neq b$, $x=a_1$, $(a_1,b)<(x,y)<(a_2,b)$ and $(x,y) \in S'$}  
By construction of $S'$, we know that $(a_1,b)$ is not the smallest element of $S \cap C_b$. Let $(i_b,b)$ denote the smallest element of  $S \cap C_b$. So we have $y\neq b$, $(i_b,b), (x,y), (a_2,b)\in S$ and $(i_b,b)<(x,y)<(a_2,b)$ which is a contradiction as $S$ is a \emph{weak-c-sparse} set of $G_{n\times m}$ .

\emph{Case 2: $y\neq b$, $x=a_2$, $(a_1,b)<(x,y)<(a_2,b)$ and $(x,y) \in S'$}  
The proof is similar to the proof for \emph{Case 1}. Let $(i_b,b)$ and $(i_y, y)$ denote the smallest element of $S \cap C_b$ and $S \cap C_y$ respectively. If $i_y<a_1$ then we have $b\neq y$, $(i_y,y)<(a_1,b)< (x,y)$. If $a_1<i_y<a_2$, then we have $y\neq b$, $(a_1,b)<(i_y,y)<(a_2,b)$. Hence $i_y$ must be equal to $a_1$, otherwise, we get a contradiction as $S$ is a \emph{weak-c-sparse} set. Now again if $i_y=a_1$, then we have $b\neq y$ and $(i_b,b)<(i_y,y)<(a_2,b)$, which is again a contradiction as $S$ is a \emph{weak-c-sparse} set.

Now observe that by construction it is clear that $S' \cap R_1=\emptyset$. Also we have $S'$ is \emph{c-sparse} subset of $G_{n\times m}$, hence trivially it is also a \emph{c-sparse} subset of $G_{n-1\times m}$ which is obtained from $G_{n\times m}$ by deleting the first row $R_1$.
So by Lemma~\ref{general upper bound}, we have $|S'| \leq n+m-2$. Also by construction is easy to see that $|S|-|S'| \leq m$. Hence $|S|\leq n+2m-2$. Thus finishing the proof.
\end{proof}

\emph{Remark:} Observe that in a $n \times 1$ checkerboard, the union of all the cells is a \emph{c-sparse} as well as \emph{weak-c-sparse} subset of $G_{n\times 1}$. Hence the bounds in Lemma~\ref{general upper bound} and Lemma~\ref{upper bound weak-c-sparse} are tight. 

\subsection{Tournaments with any given fixed dichromatic number} \label{subsec:Finite construction}

Let $k \in \mathbb{N}$ be any fixed natural number. In this section, we give an easy construction of a tournament with dichromatic number exactly equal to $k$. Let $G_{2k-1 \times 2k-1}$ be a checkerboard of size $(2k-1)^2$. Let the cells $(i,j)$ be totally ordered with $(i_1,j_1)<(i_2,j_2)$ iff either $i_1<i_2$ or $i_1=i_2$ and $j_1<j_2$. Now we construct a tournament $\vec{T}_k$ with the number of vertices equal to $(2k-1)^2$ (Note that the notation $\vec{T}_k$ is slightly different from the standard notation of a tournament, where the subscript denotes the number of vertices but here it actually denotes the dichromatic number). 

\emph{Construction of $\vec{T}_k$:}
\begin{itemize}
    \item The $(2k-1)^2$ cells of $G_{2k-1 \times 2k-1}$ denote all the vertices of $\vec{T}_k$. The vertices are also totally ordered with the order induced from the total ordering of the cells of $G_{2k-1 \times 2k-1}$.
    
    \item If cells $(i_1,j_1),(i_2,j_2)$ lie in the same column of $G_{2k-1 \times 2k-1}$ (that is $j_1=j_2$), then the edge between $(i_1,j_1)$ and $(i_2,j_2)$ is directed from $(i_1,j_1)$ to $(i_2,j_2)$ iff $(i_1,j_1)<(i_2,j_2)$.

    \item If cells $(i_1,j_1),(i_2,j_2)$ does not lie in the same column of $G_{2k-1 \times 2k-1}$ (that is $j_1 \neq j_2$), then the edge between $(i_1,j_1)$ and $(i_2,j_2)$ is directed from $(i_1,j_1)$ to $(i_2,j_2)$ iff $(i_1,j_1)>(i_2,j_2)$.
\end{itemize}

Now we will prove that the dichromatic number of $\vec{T}_k$ is exactly equal to $k$. We define a subset of vertices of $\vec{T}_k$ to be \emph{c-sparse} iff the corresponding subset of cells of $G_{2k-1 \times 2k-1}$ is \emph{c-sparse}.

We state the following lemma which is very well known~\cite{bang2008digraphs} and easy to show.

\begin{lemma}\label{triangle}
    A tournament is acyclic iff it contains no directed triangles.
\end{lemma}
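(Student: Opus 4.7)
The plan is to prove the non-trivial direction by a shortest-cycle argument, using that in a tournament every pair of distinct vertices is joined by an arc.

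First I would dispense with the easy direction: if the tournament $\vec{T}$ is acyclic, it has no directed cycles at all, and in particular no directed triangles, so this direction is immediate from the definition.

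For the interesting direction, I would argue the contrapositive: assume $\vec{T}$ contains at least one directed cycle and show that it must contain a directed triangle. The key step is to take a \emph{shortest} directed cycle $C : v_1 \to v_2 \to \cdots \to v_\ell \to v_1$ in $\vec{T}$, which exists since the set of directed cycles is nonempty and cycle lengths are positive integers. If $\ell = 3$, we are done. Otherwise $\ell \geq 4$, and then I would look at the pair $v_1, v_3$: since $\vec{T}$ is a tournament, exactly one arc exists between them.

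The main (and only) case analysis is on the orientation of this chord. If the arc is $v_3 \to v_1$, then $v_1 \to v_2 \to v_3 \to v_1$ is a directed triangle in $\vec{T}$, so we are done. If instead the arc is $v_1 \to v_3$, then $v_1 \to v_3 \to v_4 \to \cdots \to v_\ell \to v_1$ is a directed cycle of length $\ell - 1 < \ell$, contradicting the minimality of $C$. Hence in every case we obtain a directed triangle, completing the proof. There is no real obstacle here; the only thing to be slightly careful about is asserting the existence of a shortest directed cycle (which is fine because the cycle length set is a nonempty subset of $\mathbb{N}$) and noting that the chord $v_1 v_3$ exists precisely because $\vec{T}$ is a tournament — this is where the tournament hypothesis is essential.
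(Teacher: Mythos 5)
Your proof is correct and is exactly the standard argument; the paper itself omits a proof of this lemma, simply citing it as well known, so there is nothing to compare against. The shortest-cycle/chord argument you give is complete (the existence of a shortest cycle and of the chord $v_1v_3$ are justified, and both orientations of the chord are handled), so no issues.
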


Now we observe that the \emph{c-sparse} sets of $\vec{T}_k$ are in one-to-one correspondence with the acyclic subtournaments of $\vec{T}_k$. 

\begin{lemma}\label{equivalence}
    A subset of vertices $V$ of $\vec{T}_k$ is c-sparse iff the induced subtournament $\vec{T}_{k}[V]$ is acyclic.
\end{lemma}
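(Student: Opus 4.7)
By Lemma~\ref{triangle}, $\vec{T}_k[V]$ is acyclic if and only if it contains no directed triangle, so the task reduces to showing that $V$ is \emph{c-sparse} if and only if $\vec{T}_k[V]$ has no directed triangle. My approach is to classify the possible directed triangles in $\vec{T}_k$ according to how their three vertices distribute among the columns of $G_{2k-1\times 2k-1}$, and check in each case when a cyclic orientation is possible.

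The plan is to rule out two of the three cases immediately using the construction. If all three vertices of a triangle lie in the same column, then all edges point from the $<$-smaller vertex to the $<$-larger vertex, so they are oriented consistently with the total order on that column and cannot form a cycle. Similarly, if the three vertices lie in three pairwise distinct columns, then every edge is directed from the $<$-larger vertex to the $<$-smaller vertex, which again follows a total order and cannot close up into a directed triangle. Hence every directed triangle must involve exactly two vertices in some common column and a third vertex in a different column.

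Next I would analyze the remaining case in detail. Suppose $(a_1,b),(a_2,b)$ share the column $b$ with $(a_1,b)<(a_2,b)$, and $(x,y)$ has $y\neq b$. The edge between the two column-mates goes $(a_1,b)\to(a_2,b)$. The two cross-column edges are reversed w.r.t.\ $<$, so a directed triangle exists if and only if $(a_2,b)\to(x,y)\to(a_1,b)$, which translates to $(x,y)<(a_2,b)$ and $(a_1,b)<(x,y)$, that is $(a_1,b)<(x,y)<(a_2,b)$. The opposite cyclic orientation $(a_1,b)\to(x,y)\to(a_2,b)\to(a_1,b)$ is impossible because its last edge would require $(a_2,b)<(a_1,b)$, contradicting our assumption. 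So triangles of this type exist in $\vec{T}_k[V]$ precisely when $V$ contains cells $(a_1,b),(a_2,b)$ in a common column together with a third cell $(x,y)$ with $y\neq b$ and $(a_1,b)<(x,y)<(a_2,b)$ — which is exactly the failure of the \emph{c-sparse} condition.

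Combining the three cases, $\vec{T}_k[V]$ contains a directed triangle if and only if $V$ fails to be \emph{c-sparse}, which together with Lemma~\ref{triangle} yields the claimed equivalence. The only step requiring real care is the case analysis of the mixed configuration, and in particular checking that only one of the two cyclic orientations is consistent with the column-edge direction; everything else is a direct reading of the edge rules in the construction of $\vec{T}_k$.
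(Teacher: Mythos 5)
Your proposal is correct and follows essentially the same route as the paper: both reduce to directed triangles via Lemma~\ref{triangle} and then show that a directed triangle exists precisely when two vertices in a common column have a third, differently-columned vertex between them in the total order. The only cosmetic difference is that you exhaustively classify triangles by column distribution, whereas the paper's converse direction starts from the $<$-largest vertex of a given triangle and deduces the same configuration.
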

\begin{proof}
First, suppose the subset of vertices $V$ of $\vec{T}_k$ is not \emph{c-sparse}. Then there exists $(x_1,y), (x_2,y), (x', y') \in V$ such that $(x_1,y)<(x',y')<(x_2,y)$ with $y' \neq y$. So clearly the points $(x_1,y), (x_2,y), (x', y') \in V$ form a directed triangle in $\vec{T}_{k} [V]$ and hence $\vec{T}_{k}[V]$ is not acyclic.

Now suppose $\vec{T}_{k}[V]$ is not acyclic then by Lemma~\ref{triangle} we know that $\vec{T}_{k}[V]$ contains a directed triangle. Let $(x_1,y_1), (x_2,y_2), (x_3, y_3)$ form a directed triangle. Without loss of generality assume  $(x_3, y_3)>(x_2, y_2)$ and $(x_3, y_3)>(x_1, y_1)$. Also without loss of generality assume the vertex $(x_3, y_3)$ has an incoming edge from $(x_1,y_1)$ and an outgoing edge to $(x_2, y_2)$. Now by construction, we know that $y_1=y_3$ and $y_2 \neq y_3$. Since $(x_1,y_1), (x_2,y_2), (x_3, y_3)$ form a directed triangle, we also know that $(x_2, y_2)$ has an outgoing edge to $(x_1, y_1)$. Now since $y_2 \neq y_1$, by construction we have $(x_2, y_2)>(x_1, y_1)$. So we have $(x_1,y_1) <(x_2,y_2)< (x_3, y_3)$ with $y_2 \neq y_1=y_3$, which implies that $V$ is not \emph{c-sparse}. Thus finishing the proof.
\end{proof}

Now we have everything needed to show that the dichromatic number of $\vec{T}_k$ is exactly equal to $k$.

\begin{theorem}
    The dichromatic number of $\vec{T}_k$ is exactly equal to $k$.
\end{theorem}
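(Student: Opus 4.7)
The plan is to combine the equivalence lemma (Lemma~\ref{equivalence}) with the sparse partition theorem (Theorem~\ref{sparse partition}) applied to the checkerboard $G_{(2k-1)\times(2k-1)}$. A $k$-coloring of $\vec{T}_k$ avoiding monochromatic directed cycles is exactly a partition of $V(\vec{T}_k)$ into $k$ color classes, each of which induces an acyclic subtournament. By Lemma~\ref{equivalence}, this is the same as partitioning the cells of $G_{(2k-1)\times(2k-1)}$ into $k$ \emph{c-sparse} sets. Hence the dichromatic number of $\vec{T}_k$ equals $\sigma(G_{(2k-1)\times(2k-1)})$.

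Next I would apply Theorem~\ref{sparse partition} with $n = 2k-1$. The theorem says that $\sigma(G_{n\times n})$ equals the smallest natural number strictly greater than $n/2$. For $n = 2k-1$ we have $n/2 = k - 1/2$, and the smallest natural number strictly greater than $k - 1/2$ is exactly $k$. Therefore $\sigma(G_{(2k-1)\times(2k-1)}) = k$, and so the dichromatic number of $\vec{T}_k$ is exactly $k$.

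Strictly speaking, one should spell out both directions. For the lower bound: any partition of $V(\vec{T}_k)$ into fewer than $k$ parts, each inducing an acyclic subtournament, would by Lemma~\ref{equivalence} yield a \emph{c-sparse} partition of $G_{(2k-1)\times(2k-1)}$ of size less than $k$, contradicting Theorem~\ref{sparse partition}. For the upper bound: Theorem~\ref{sparse partition} furnishes a \emph{c-sparse} partition of $G_{(2k-1)\times(2k-1)}$ into $k$ parts; by Lemma~\ref{equivalence} each part induces an acyclic subtournament of $\vec{T}_k$, and coloring each part with its own color yields a valid $k$-coloring with no monochromatic directed cycle.

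Since all the combinatorial heavy lifting has already been done in Theorem~\ref{sparse partition} and Lemma~\ref{equivalence}, there is no real obstacle here; the proof is a short translation between the two languages (\emph{c-sparse} partitions of the checkerboard and acyclic partitions of the tournament). The only thing to be careful about is the off-by-one in the formula: the board has side $2k-1$ (odd), which is precisely why the smallest integer strictly greater than $(2k-1)/2$ is $k$ and not $k-1$, matching the target dichromatic number.
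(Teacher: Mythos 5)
Your proposal is correct and follows essentially the same route as the paper: it invokes Lemma~\ref{equivalence} to translate acyclic color classes into \emph{c-sparse} sets and then applies Theorem~\ref{sparse partition} with $n=2k-1$, for which the smallest integer strictly greater than $n/2$ is $k$. The extra care you take in spelling out both directions and the arithmetic check is a fine elaboration but not a different argument.
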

\begin{proof}
    Suppose we color the vertices of $\vec{T}_k$ such that the vertices of each color class induce an acyclic subtournament. Then by Lemma~\ref{equivalence}, we know that the set of vertices of each color class is a \emph{c-sparse} set and also any \emph{c-sparse} set corresponds to an acyclic induced subtournament. So by Theorem~\ref{sparse partition} we know that $k$ colors are necessary and also sufficient to color the vertices of $\vec{T}_k$, such that the vertices of each color class form an induced acyclic subtournament. Hence the dichromatic number of $\vec{T}_k$ is exactly equal to $k$.
\end{proof}

\subsection{A lower bound for the dichromatic number of an oriented complete balanced $n$-partite graphs} \label{subsec:$n$-partite graphs}

Let $K^{(m)}_n$ denote a complete balanced $n$-partite graph where each partition (or maximum independent set) has exactly $m$. vertices. 

Let $G_{n \times m}$ be a checkerboard of size $nm$. Let the cells $(i,j)$ be totally ordered with $(i_1,j_1)<(i_2,j_2)$ iff either $i_1<i_2$ or $i_1=i_2$ and $j_1<j_2$. Now we construct an oriented complete balanced $n$-partite graph $\vec{K}^{(m)}_n$.

\emph{Construction:}
\begin{itemize}
    \item The $nm$ cells of $G_{n \times m}$, denote all the vertices of $\vec{K}^{(m)}_n$. Also, each row indicates one partition(or maximum independent set) of $\vec{K}^{(m)}_n$. The vertices are also totally ordered with the order induced from the total ordering of the cells of $G_{n \times m}$.
    
    \item If cells $(i_1,j_1),(i_2,j_2)$ lie in the same column of $G_{n \times m}$ (that is $j_1=j_2$), then the edge between $(i_1,j_1)$ and $(i_2,j_2)$ is directed from $(i_1,j_1)$ to $(i_2,j_2)$ iff $(i_1,j_1)<(i_2,j_2)$.

    \item If cells $(i_1,j_1),(i_2,j_2)$ does not lie in the same column of $G_{n \times m}$ (that is $j_1 \neq j_2$), then the edge between $(i_1,j_1)$ and $(i_2,j_2)$ is directed from $(i_1,j_1)$ to $(i_2,j_2)$ iff $(i_1,j_1)>(i_2,j_2)$.
\end{itemize}

We define a subset of vertices of $\vec{K}^{(m)}_n$ to be \emph{weak-c-sparse} iff the corresponding subset of cells of $G_{n \times m}$ is \emph{weak-c-sparse}. We begin with the simple observation below.

\begin{observation}\label{equivalence n-partite}
    For subset of vertices $V$ of $\vec{K}^{(m)}_n$ if the induced subdigraph $\vec{K}^{(m)}_n[V]$ has no directed triangle then $V$ is weak-c-sparse.
\end{observation}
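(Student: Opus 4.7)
The plan is to prove the contrapositive: assume $V$ is not \emph{weak-c-sparse} and exhibit a directed triangle in $\vec{K}^{(m)}_n[V]$. By negating the definition of \emph{weak-c-sparse}, there exist three cells $(a_1,b), (a_2,b), (x,y) \in V$ with $y \neq b$, $a_1 < x < a_2$, and $(a_1,b) < (x,y) < (a_2,b)$ in the induced total order. I would name these three vertices $u = (a_1,b)$, $v = (x,y)$, $w = (a_2,b)$ and work with them explicitly.

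First I would check that $u$, $v$, $w$ lie in pairwise distinct rows, so that all three pairs are actually edges of $\vec{K}^{(m)}_n$ (recall rows are the independent sets/partitions of the $n$-partite graph). This is immediate: the row indices are $a_1$, $x$, $a_2$, and $a_1 < x < a_2$ forces them to be distinct. Hence $u,v,w$ form a triangle in the underlying complete $n$-partite graph, and it suffices to determine the orientation of each edge.

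Next I would apply the two construction rules case by case. The pair $u = (a_1,b)$ and $w = (a_2,b)$ lie in the same column, and $u < w$, so by the first rule the edge is oriented $u \to w$. The pair $u = (a_1,b)$ and $v = (x,y)$ lie in different columns (since $y \neq b$) with $u < v$, so by the second rule the edge is oriented $v \to u$. Finally, the pair $v = (x,y)$ and $w = (a_2,b)$ lie in different columns with $v < w$, so again by the second rule the edge is oriented $w \to v$. Combining these, we obtain the directed cycle $u \to w \to v \to u$, a directed triangle inside $\vec{K}^{(m)}_n[V]$, contradicting the hypothesis.

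This establishes the observation. There is no substantive obstacle here; the only thing to be careful about is bookkeeping the two orientation rules (same column versus different column) and confirming that the strict inequality $a_1 < x < a_2$ supplied by the \emph{weak-c-sparse} condition is exactly what rules out any of the three cells sharing a row, so that all three edges are genuinely present in the $n$-partite graph.
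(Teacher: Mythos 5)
Your proof is correct and follows essentially the same route as the paper's: take the contrapositive, extract the three cells witnessing the failure of the weak-c-sparse condition, and check via the two orientation rules that they form a directed triangle. You actually supply more detail than the paper (verifying the three row indices $a_1 < x < a_2$ are distinct so that all three edges exist in the $n$-partite graph, and tracking each orientation explicitly), which is a welcome elaboration of the paper's one-line ``so clearly by construction'' step.
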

\begin{proof}
    Suppose the subset of vertices $V$ of $\vec{K}^{(m)}_n$ is not \emph{weak-c-sparse}. Then there exists $(x_1,y), (x_2,y), (x', y') \in V$ such that $(x_1,y)<(x',y')<(x_2,y)$ with $x_1<x'< x_2$ and $y' \neq y$. So clearly by construction the points $(x_1,y), (x_2,y), (x', y') \in V$ form a directed triangle in the induced subdigraph $\vec{K}^{(m)}_n[V]$.
\end{proof}

\begin{theorem}\label{n-partite coloring}
The minimum number of colors needed to color the vertices of $\vec{K}^{(m)}_n$ such that there is no monochromatic directed triangle is greater than or equal to $nm/(n+2m-2)$.
    
\end{theorem}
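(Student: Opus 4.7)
The plan is to combine Observation~\ref{equivalence n-partite} with Lemma~\ref{upper bound weak-c-sparse} via a straightforward counting argument, which together already do essentially all of the combinatorial work. I would begin by suppressing the color labels and considering any proper coloring of $V(\vec{K}^{(m)}_n)$ with $c$ colors that avoids monochromatic directed triangles; let $V_1, V_2, \ldots, V_c$ denote the resulting color classes, so that $\vec{K}^{(m)}_n[V_i]$ contains no directed triangle for every $i$.

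First I would invoke Observation~\ref{equivalence n-partite}: since each induced subdigraph $\vec{K}^{(m)}_n[V_i]$ is triangle-free, the corresponding subset $V_i$ of cells of $G_{n\times m}$ is \emph{weak-c-sparse}. Next I would apply Lemma~\ref{upper bound weak-c-sparse} to conclude that $|V_i|\leq n+2m-2$ for every index $i$. Summing over the color classes, which partition the $nm$ vertices of $\vec{K}^{(m)}_n$, gives $nm = \sum_{i=1}^{c} |V_i| \leq c(n+2m-2)$, and rearranging yields the desired lower bound $c \geq nm/(n+2m-2)$.

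There is no real obstacle here; the heavy lifting was already done in the preceding checkerboard lemmas. The one subtle point worth double-checking is the direction of Observation~\ref{equivalence n-partite}: the theorem assumes no monochromatic directed triangle and needs weak-c-sparseness of each color class, and this is precisely the contrapositive of what the observation establishes (namely, that a subset which fails to be weak-c-sparse produces a directed triangle in the induced subdigraph). Once this is noted, the proof reduces to the one-line counting inequality above.
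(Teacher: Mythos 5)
Your proof is correct and follows essentially the same route as the paper: apply Observation~\ref{equivalence n-partite} (in contrapositive form) to see each color class is \emph{weak-c-sparse}, bound its size by $n+2m-2$ via Lemma~\ref{upper bound weak-c-sparse}, and divide $nm$ by this bound. Your explicit counting step and the remark about the direction of the observation are both accurate and, if anything, slightly more careful than the paper's own wording.
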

\begin{proof}
    Suppose we color the vertices $\vec{K}^{(m)}_n$ such that the vertices colored with the same color do not induce a directed triangle. Now by Observation~\ref{equivalence n-partite}, we know that any subset of vertices of $\vec{K}^{(m)}_n$ which do not induce a directed triangle must be a \emph{weak-c-sparse} set of $G_{n \times m}$. Hence by Lemma~\ref{upper bound weak-c-sparse}, each color class must have cardinality less than or equal to $n+2m-2$. Hence the dichromatic number of $\vec{K}^{(m)}_n$ is greater than or equal to $nm/(n+2m-2)$.
\end{proof}

\emph{Remark:} It is trivial to observe that Theorem~\ref{n-partite coloring} implies that the dichromatic number of the constructed oriented graph $\vec{K}^{(m)}_n$ is greater than or equal to $nm/(n+2m-2)$.

\subsection{Uncountable tournaments with uncountable dichromatic number} \label{subsec: Infinite construction}

In this section, we assume the continuum hypothesis and \emph{ZFC}. Hence we know that $\aleph_1$ is the cardinality of $\mathbb{R}$. We will now construct a tournament on $\aleph_1$ vertices with dichromatic number $\aleph_1$, which is also a very natural generalization of the finite tournament $\vec{T}_k$ constructed in Section~\ref{subsec:Finite construction}.

For a point $p \in \mathbb{R}^2$, we denote by $x(p)$ and $y(p)$ to be the $x$-coordinate and $y$-coordinate of the point $p$ respectively.
Let $\vec{T}_{\aleph_1}$ be a tournament with vertex set being all the points on the plane $\mathbb{R}^2$(Note that cardinality of $\mathbb{R}^2$ is same as the cardinality of $\mathbb{R}$). Hence there is an edge between any two vertices of $T_{\aleph_1}$. Let the vertices of $T_{\aleph_1}$ denoted by $(i,j)$ be totally ordered with $(i_1,j_1)<(i_2,j_2)$ iff either $i_1<i_2$ or $i_1=i_2$ and $j_1<j_2$. Now we detail the construction of the uncountable tournament $\vec{T}_{\aleph_1}$ below.\\

 \emph{Construction of $\vec{T}_{\aleph_1}$:}
 \begin{itemize}
    \item Let $V(\vec{T}_{\aleph_1})=\mathbb{R}^2$.

\item For any two vertices $(i_1,j_1),(i_2,j_2)$ with $j_1=j_2$, the edge between $(i_1,j_1)$ and $(i_2,j_2)$ is directed from $(i_1,j_1)$ to $(i_2,j_2)$ iff $(i_1,j_1)<(i_2,j_2)$.

\item For any two vertices $(i_1,j_1),(i_2,j_2)$ with $j_1 \neq j_2$, then the edge between $(i_1,j_1)$ and $(i_2,j_2)$ is directed from $(i_1,j_1)$ to $(i_2,j_2)$ iff $(i_1,j_1)>(i_2,j_2)$.
    
\end{itemize}

We now state a standard well-known lemma in real analysis which easily follows from the separability property of $\mathbb{R}$. We will use the lemma in the proof of the main theorem.

\begin{lemma}\label{Interval}
     There do not exist uncountably many pairwise disjoint, non-empty open intervals of $\mathbb{R}$. 
\end{lemma}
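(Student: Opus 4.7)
The plan is to exploit the separability of $\mathbb{R}$, namely that $\mathbb{Q}$ is a countable dense subset. First, I would invoke density of $\mathbb{Q}$ in $\mathbb{R}$: every non-empty open interval $(a,b)$ contains some rational number, since between any two reals there is a rational. Second, fix once and for all an enumeration $q_1, q_2, q_3, \dots$ of $\mathbb{Q}$. Given any pairwise disjoint family $\mathcal{F}$ of non-empty open intervals of $\mathbb{R}$, define a map $f \colon \mathcal{F} \to \mathbb{Q}$ by sending each $I \in \mathcal{F}$ to $q_{n(I)}$, where $n(I)$ is the smallest index such that $q_{n(I)} \in I$ (this is well-defined because $I \cap \mathbb{Q}$ is non-empty by step one, and $\mathbb{N}$ is well-ordered, so no axiom of choice is needed).

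Next I would argue that $f$ is injective: if $I_1 \neq I_2$ are in $\mathcal{F}$ and $f(I_1) = f(I_2) = q$, then $q$ would lie in both $I_1$ and $I_2$, contradicting that the intervals of $\mathcal{F}$ are pairwise disjoint. Hence $|\mathcal{F}| \leq |\mathbb{Q}| = \aleph_0$, so $\mathcal{F}$ is at most countable, which is exactly the statement of the lemma. There is essentially no obstacle here; the entire content is density of $\mathbb{Q}$ plus countability of $\mathbb{Q}$, and the only step that deserves an explicit sentence is the injectivity coming from pairwise disjointness.
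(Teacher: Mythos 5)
Your proof is correct and is exactly the standard separability argument the paper alludes to (the paper states this lemma without proof, noting only that it ``easily follows from the separability property of $\mathbb{R}$''). Picking the least-indexed rational in each interval and using pairwise disjointness for injectivity is precisely the intended argument, and your remark that no choice is needed is a nice touch.
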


\begin{theorem}
    The tournament $\vec{T}_{\aleph_1}$ has uncountable dichromatic number.
\end{theorem}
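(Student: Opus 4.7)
The approach mirrors the reduction in Section~\ref{subsec:Finite construction}. Assume for contradiction that $\vec{T}_{\aleph_1}$ admits a dichromatic coloring with only countably many colors, giving a partition $\mathbb{R}^2 = \bigsqcup_{n \in \mathbb{N}} S_n$ such that each $\vec{T}_{\aleph_1}[S_n]$ is acyclic. Extend the definition of \emph{c-sparse} to subsets of $\mathbb{R}^2$ under the lex order on $(i,j)$ in the obvious way. Lemma~\ref{triangle} generalises to infinite tournaments: a shortest directed cycle (which exists whenever any directed cycle does) must be a triangle, since in any longer cycle a chord would produce a strictly shorter cycle. The proof of Lemma~\ref{equivalence} then transfers verbatim, as it uses only the arc-direction rules together with the total order. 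Hence each color class $S_n$ is c-sparse.

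The structural heart of the argument is the claim that for any c-sparse $S \subseteq \mathbb{R}^2$, the set $R^{(\geq 2)}(S) := \{y \in \mathbb{R} : |V_y| \geq 2\}$ is countable, where $V_y := \{x \in \mathbb{R} : (x,y) \in S\}$. For each $y \in R^{(\geq 2)}(S)$ define $I_y := (\inf V_y, \sup V_y)$, a nonempty open interval of $\mathbb{R}$. The key observation is that for $y_1 \neq y_2$, no point of $V_{y_2}$ lies in $I_{y_1}$: if $x^{\ast} \in V_{y_2} \cap I_{y_1}$, then $\inf V_{y_1} < x^{\ast} < \sup V_{y_1}$ gives $a_1, a_2 \in V_{y_1}$ with $a_1 < x^{\ast} < a_2$, and the three points $(a_1, y_1),(x^{\ast}, y_2),(a_2, y_1) \in S$ violate c-sparseness. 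A short case analysis then upgrades this to $I_{y_1} \cap I_{y_2} = \emptyset$: if the two intervals meet and one assumes (WLOG) $\inf V_{y_1} \leq \inf V_{y_2}$, one picks, depending on whether $\sup V_{y_1} \leq \sup V_{y_2}$ or not, an appropriate element of $V_{y_1}$ or of $V_{y_2}$ that is forced into the other open interval, contradicting the previous observation. Lemma~\ref{Interval} then yields $|R^{(\geq 2)}(S)| \leq \aleph_0$.

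Finally, applying this to every color class and taking the countable union, the set $Y_{\mathrm{crowded}} := \bigcup_{n} R^{(\geq 2)}(S_n)$ is still countable. Since $|\mathbb{R}| = \aleph_1$, we may choose $y_0 \in \mathbb{R} \setminus Y_{\mathrm{crowded}}$. By the choice of $y_0$, every color class $S_n$ contains at most one point with second coordinate $y_0$. Summing over the countably many color classes, the horizontal line $\{(x, y_0) : x \in \mathbb{R}\}$ is covered by at most $\aleph_0$ points of $\bigcup_n S_n = \mathbb{R}^2$, contradicting the fact that this line has cardinality $\aleph_1$. The main obstacle is the case analysis establishing pairwise disjointness of the intervals $I_y$; once that is in place, Lemma~\ref{Interval} is exactly the tool that converts the structural c-sparse condition into a countability bound, and a clean pigeonhole on a single ``fresh'' horizontal line delivers the contradiction.
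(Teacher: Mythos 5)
Your overall strategy is sound and close in spirit to the paper's, but the step you yourself single out as the main obstacle --- the pairwise disjointness of the intervals $I_y$ --- is exactly where your argument has a genuine gap. Your ``key observation'' (no point of $V_{y_2}$ lies in the \emph{open} interval $I_{y_1}$) uses only the clause of the c-sparse condition with $a_1<x<a_2$, i.e.\ the \emph{weak}-c-sparse condition, and under that condition alone your disjointness and countability claims are simply false: the set $S=\{(0,y),(1,y):y\in\mathbb{R}\}$ is weak-c-sparse, every $V_y=\{0,1\}$ has two elements, and all the intervals $I_y=(0,1)$ coincide. Concretely, if you run your case analysis on two rows with $V_{y_1}=V_{y_2}=\{a,b\}$, the intervals meet (they are equal), yet no element of either set lies in either open interval, so there is nothing to ``force into the other open interval'' and the contradiction you describe never materialises. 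One can check that this is essentially the only configuration surviving your key observation, and it is ruled out only by the lexicographic boundary case of c-sparseness: for $y_1<y_2$ the cells satisfy $(a,y_1)<(a,y_2)<(b,y_1)$ with $y_2\neq y_1$, so c-sparseness is violated (equivalently, $(a,y_1)\to(b,y_1)\to(a,y_2)\to(a,y_1)$ is a directed triangle). So your structural lemma is true, but the proof you sketch does not establish it; you need one further application of the endpoint case of c-sparseness (or of acyclicity).

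The paper sidesteps this by a small but crucial change of parameter: via the function $f$ it selects a single colour class meeting uncountably many horizontal lines in at least \emph{three} points each (Claim~\ref{disjoint}). With three points $x_1<x_2<x_3$ on a line, the middle one lies strictly inside that line's own interval $(m_r,M_r)$; then the ``no foreign point in my open interval'' observation alone forces pairwise disjointness, since overlap of two intervals would imply containment, which would place an interior point of one row inside the other row's interval. If you replace your $R^{(\geq 2)}$ by $R^{(\geq 3)}$ throughout, your argument closes cleanly: a line $y=y_0$ avoiding the countable union of crowded sets meets each colour class in at most two points, which still covers it by only countably many points and yields the same contradiction. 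Either repair works; as written, though, the disjointness step is incomplete.
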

\begin{proof}
    Suppose for the sake of contradiction let $C=\{C_i: i\in \mathbb{N}\}$ denote a countable coloring of  $\vec{T}_{\aleph_1}$ such that there is no monochromatic directed cycle in $\vec{T}_{\aleph_1}$. Note that since all the points of the plane are vertices of $\vec{T}_{\aleph_1}$, $C$ can be also viewed as a coloring of all the points in the plane.  Let $V(C_i)$ denote the vertices of $\vec{T}_{\aleph_1}$ which are colored with the color $C_i$. Let $L_r$ denote the line $y=r$. Let $X=\{L_r: r\in\mathbb{R}\}$ be the set of all lines parallel to the x-axis. Define a function $f:X \rightarrow \mathbb{N}$, such that $f(L_r)=i$ iff $i= \min\{j \in \mathbb{N} : |V(C_j)\cap L_r| \geq 3  \}$. Since $C$ is a coloring of the whole plane all the points of the line $L_r$ must be colored using some color in $C$. Also, observe that $C$ is countable a set and $L_r$ is an uncountable set for all $r\in \mathbb{R}$. Hence for all $r\in \mathbb{R}$, there must exist at least three points in $L_r$ which get the same color. So the function $f$ is well-defined. Since $X$ is uncountable there exists $i^* \in \mathbb{N}$ such that the set $S=\{r \in \mathbb{R}:f(L_r)=i^*\}$ is uncountable. For any $r\in S$, let,
\[m_r=\inf_{p \in L_r \cap V(C_{i^*}) } x(p)\; \text{and} \; M_r=\sup_{p \in L_r \cap V(C_i^*) } x(p)\]. For $r\in S$, let $I_r$ denote the open interval $(m_r, M_r)$. Note that $m_r \neq M_r$, for all $r \in S$ as $|V(C_{i^*})\cap L_r| \geq 3$, hence $I_r$ is non-empty for all $r\in S$. 

\begin{claim}\label{disjoint}
     For $r\neq r' \in S$, the open interval $(m_r,M_r)$ is disjoint from $(m_{r'},M_{r'})$.
\end{claim}

\begin{claimproof}
If for some point $p_{r'} \in L_{r'} \cap V(C_{i^*})$, $x(p_{r'}) \in  (m_r,M_r)$. Then by definition from $m_r$ and $M_r$, there exists $q_r, q'_r \in L_{r} \cap V(C_{i^*})$, such that $x(p_{r'}) \in  (x(q_r), x(q'_r))$. Now since $x(q_r)<x(p_{r'})<x(q'_r)$ and $y(q_r)=y(q'_r)$, by construction we see that $p_{r'}, q_r, q'_r$ form a directed monochromatic triangle in $\vec{T}_{\aleph_1}$, which is a contradiction. Hence for any point $p_{r'} \in L_{r'} \cap V(C_{i^*})$, $x(p_{r'}) \notin  (m_r,M_r)$. So if $(m_r,M_r) \cap (m_{r'},M_{r'})$ is non-empty then $(m_r,M_r) \subseteq (m_{r'},M_{r'})$. Now since $|V(C_{i^*})\cap L_r| \geq 3$, there exists $p^*_r \in L_{r} \cap V(C_i^*)$, $x(p^*_r) \in  (m_r,M_r)$. So we have $x(p^*_r) \in (m_{r'},M_{r'})$. But by symmetry we also have for any point $p_r \in L_{r} \cap V(C_i^*)$, $x(p_r) \notin  (m_{r'},M_{r'})$, which gives a contradiction. Hence the open interval $(m_r,M_r)$ is disjoint from $(m_{r'},M_{r'})$. 
\end{claimproof}

Let $I=\{I_r: r\in S\}$ be a collection of non-empty open intervals of $\mathbb{R}$. Now since $S$ is uncountable, $I$ is uncountable. Also by Claim~\ref{disjoint}, we know that the intervals in $I$ are pairwise disjoint. Hence by Lemma~\ref{Interval} we have a contradiction. Thus finishing our proof.  

 \end{proof}

\section{Acknowledgements}
I thank Shivesh Roy and Partha Pratim Ghosh for some nice discussions.

\bibliography{references}

\begin{thebibliography}{10}

\bibitem{bang2008digraphs}
J{\o}rgen Bang-Jensen and Gregory~Z Gutin.
\newblock {\em Digraphs: theory, algorithms and applications}.
\newblock Springer Science \& Business Media, 2008.

\bibitem{chudnovsky2016erd}
Maria Chudnovsky.
\newblock The {Erd\H{o}s-Hajnal} conjecture---{A Survey}.
\newblock {\em arXiv preprint arXiv:1606.08827}, 2016.

\bibitem{ellis2019cycle}
Paul Ellis and D{\'a}niel~T Soukup.
\newblock Cycle reversions and dichromatic number in tournaments.
\newblock {\em European Journal of Combinatorics}, 77:31--48, 2019.

\bibitem{erdos1979problems}
Paul Erd{\"o}s.
\newblock Problems and results in number theory.
\newblock In {\em Proc. Ninth Manitoba Conference on Numerical Math. and Computing}, pages 3--21, 1979.

\bibitem{GALVIN1973167}
Fred Galvin and Saharon Shelah.
\newblock Some counterexamples in the partition calculus.
\newblock {\em Journal of Combinatorial Theory, Series A}, 15(2):167--174, 1973.

\bibitem{harutyunyan2023colouring}
Ararat Harutyunyan et~al.
\newblock Colouring complete multipartite and kneser-type digraphs.
\newblock {\em arXiv preprint arXiv:2309.16565}, 2023.

\bibitem{HARUTYUNYAN20121823}
Ararat Harutyunyan and Bojan Mohar.
\newblock Two results on the digraph chromatic number.
\newblock {\em Discrete Mathematics}, 312(10):1823--1826, 2012.

\bibitem{DBLP:journals/jgt/Joo20}
Attila Jo{\'{o}}.
\newblock Uncountable dichromatic number without short directed cycles.
\newblock {\em J. Graph Theory}, 94(1):113--116, 2020.

\bibitem{DBLP:journals/jgt/Komjath20}
P{\'{e}}ter Komj{\'{a}}th.
\newblock Constructions of infinite graphs with ramsey property.
\newblock {\em J. Graph Theory}, 94(4):499--508, 2020.

\bibitem{mohar_wu_2016}
Bojan Mohar and Hehui Wu.
\newblock Dichromatic number and fractional chromatic number.
\newblock {\em Forum of Mathematics, Sigma}, 4:e32, 2016.

\bibitem{NEUMANNLARA1982265}
V~Neumann-Lara.
\newblock The dichromatic number of a digraph.
\newblock {\em Journal of Combinatorial Theory, Series B}, 33(3):265--270, 1982.

\bibitem{NEUMANNLARA198483}
V~Neumann-Lara and J~Urrutia.
\newblock Vertex critical r-dichromatic tournaments.
\newblock {\em Discrete Mathematics}, 49(1):83--87, 1984.

\bibitem{NeumannLara1994The3A}
Victor Neumann-Lara.
\newblock The 3 and 4-dichromatic tournaments of minimum order.
\newblock {\em Discret. Math.}, 135:233--243, 1994.

\bibitem{NeumannLara2000DichromaticNC}
Victor Neumann-Lara.
\newblock Dichromatic number, circulant tournaments and zykov sums of digraphs.
\newblock {\em Discuss. Math. Graph Theory}, 20:197--207, 2000.

\bibitem{soukup2018orientations}
D{\'a}niel~T Soukup.
\newblock Orientations of graphs with uncountable chromatic number.
\newblock {\em Journal of Graph Theory}, 88(4):606--630, 2018.

\bibitem{steiner2023coloring}
Raphael Steiner.
\newblock On coloring digraphs with forbidden induced subgraphs.
\newblock {\em Journal of Graph Theory}, 103(2):323--339, 2023.

\end{thebibliography}

\newpage

\end{document}